\documentclass{amsart}
\usepackage{amssymb}
\newtheorem{theorem*}{Theorem}
\newtheorem{lemma}{Lemma}
\newtheorem{definition}{Definition}
\begin{document}
\author[A.V. Petukhov]{Alexey V. Petukhov}
\title[Affinity criterion for the quotient of an algebraic group by a one-dimensional subgroup]{Affinity criterion for the quotient of an algebraic group by a
one-dimensional subgroup}
\begin{abstract}
  In this work we show that the homogeneous space of an affine algebraic group $G$ by a one-dimensional unipotent subgroup $H$ is affine if and only if the subgroup is not contained in any reductive subgroup
of $G$.
\end{abstract}
\maketitle
    Let $G$ be an affine algebraic group over an algebraically closed field of characteristic zero and $H$ a closed subgroup of $G$. It is well known that the homogeneous space $G/H$ has canonical structure of a quasiprojective $G$-variety, but it is not known for which pairs $(G,H)$ the variety $G/H$ is affine.
The answer in the case of a reductive group $G$ was found by Matsushima ~\cite{Mats}. The quotient space is affine variety if and only if $H$ is reductive. In ~\cite{MM} it is proved that $G/H$ is affine if and only if $G/H^u$ is affine, where $H^u~$is the unipotent radical of $H$. Moreover, it is easy to check that if $G/H^u$ is affine, then $G/F$ is affine for any unipotent subgroups $F \subset H^u$.
The main goal of this work is to determine when the homogeneous space by a one-dimensional unipotent subgroup $H$ is affine.
\begin{theorem*} The homogeneous space $G/H$ of an affine algebraic group $G$ by a one-dimensional unipotent group $H$ is affine if and only if $H$ is not contained in any reductive subgroup of $G$. \end{theorem*}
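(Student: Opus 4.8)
The plan is to reduce the entire statement to a single question about one locally nilpotent derivation, and then to read off both directions from the representation theory of $\mathfrak{sl}_2$. Since $H\cong\mathbb{G}_a$ acts freely on $G$ by right translation, the projection $G\to G/H$ is a principal $\mathbb{G}_a$-bundle, i.e.\ a torsor under $\mathbb{G}_a\cong\mathcal{O}$. Over a base $Y$ such torsors are classified by $H^1(Y,\mathcal{O}_Y)$, which vanishes when $Y$ is affine; conversely a trivialization exhibits $G\cong(G/H)\times\mathbb{A}^1$ and so realizes $G/H$ as a closed, hence affine, subvariety of $G$. I would therefore first prove the reformulation: $G/H$ is affine if and only if the torsor is trivial, if and only if the locally nilpotent derivation $D$ generating the right $H$-action admits a \emph{slice}, that is a function $f\in\mathbb{k}[G]$ with $Df=1$. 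Writing $e\in\mathfrak{g}$ for the nilpotent generator of $\mathrm{Lie}(H)$, the operator $D$ is the left-invariant vector field $X_e$, and expressing elements of $\mathbb{k}[G]$ as matrix coefficients of finite-dimensional representations (on which $X_e$ acts through $e$ on the relevant slot) the criterion becomes: $G/H$ is affine if and only if some finite-dimensional $G$-module $W$ carries a nonzero $G$-fixed vector lying in $\mathrm{im}(e)$.

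The second step is to translate the geometric hypothesis into Lie-theoretic terms. I claim $H$ lies in a reductive subgroup of $G$ if and only if $e$ can be completed to an $\mathfrak{sl}_2$-triple $(e,h,f)$ in $\mathfrak{g}$: given such a subgroup $R$, the element $e$ is a nonzero nilpotent of the reductive Lie algebra $\mathrm{Lie}(R)$ and Jacobson--Morozov supplies the triple inside $\mathrm{Lie}(R)$; conversely the span of a triple is a semisimple, hence algebraic, subalgebra whose connected subgroup is reductive and contains $H$.

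With these reductions the easy direction is immediate. If $(e,h,f)$ is an $\mathfrak{sl}_2$-triple, then $X_e,X_h,X_f$ make $\mathbb{k}[G]$ into a locally finite, hence (by Weyl) semisimple, $\mathfrak{sl}_2$-module. The constant function $1$ is annihilated by every $X_\xi$, so it spans a trivial summand, and in a semisimple $\mathfrak{sl}_2$-module the image of $e$ is contained in the sum of the nontrivial summands and therefore meets the trivial isotypic component in $0$. Thus $1\notin\mathrm{im}(D)$, no slice exists, and $G/H$ is not affine, recovering through Matsushima the ``only if'' half. (The same half also follows from the fibration $G/H\to G/R$, whose fibre over the base point is the closed, non-affine subvariety $R/H$.)

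The hard direction is the construction, starting from the assumption that $e$ lies in no $\mathfrak{sl}_2$-triple, of a slice, equivalently of a finite-dimensional $G$-module with a $G$-fixed vector in $\mathrm{im}(e)$. Here I would use the Levi decomposition $G=L\ltimes G^u$ and split $e=e_\ell+e_n$ with $e_\ell\in\mathfrak{l}$ and $e_n\in\mathfrak{g}^u$. When $e_\ell=0$, so $H\subseteq G^u$, the quotient $G/G^u$ is the affine reductive group $\overline{G}$; applying the inheritance property quoted above with the ambient group taken to be $G$ itself gives at once that $G/H$ is affine. The genuine difficulty is the mixed case $e_\ell\neq0$, where $H$ need not lie in $G^u$ yet may still avoid every reductive subgroup; already $G=\mathrm{SL}_2\ltimes\mathbb{k}^2$ shows that affinity is governed by the precise position of $e_n$ relative to $e_\ell\cdot\mathbb{k}^2$, and there the dual of the natural affine representation produces the required fixed vector in $\mathrm{im}(e)$ exactly when $e$ is not completable. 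I expect this construction to be the main obstacle: one must exploit the failure of Jacobson--Morozov, namely the non-existence of a semisimple $h$ with $[h,e]=2e$, to manufacture, by an induction on the structure of $\mathfrak{g}^u$ or from a well-chosen (co)adjoint-type $G$-module, a $G$-fixed vector in $\mathrm{im}(e)$, and hence the slice witnessing the affinity of $G/H$.
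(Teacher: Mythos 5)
Your preliminary reductions are correct and take a genuinely different route from the paper. The chain ``$G/H$ is affine $\Leftrightarrow$ the $\mathbb{G}_a$-torsor $G\to G/H$ is trivial $\Leftrightarrow$ there is $f$ with $X_ef=1$ $\Leftrightarrow$ some finite-dimensional $G$-module has a nonzero $G$-fixed vector in $\mathrm{im}(e)$'' is sound (local finiteness of the right regular representation plus your matrix-coefficient computation), the translation of ``$H$ lies in a reductive subgroup'' into ``$e$ completes to an $\mathfrak{sl}_2$-triple'' is correct via Jacobson--Morozov and the algebraicity of semisimple subalgebras, and your semisimplicity argument gives a complete, self-contained proof of the ``only if'' half that does not even need Matsushima. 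The paper instead phrases everything through set-theoretic sections of the $H$-action and the two lemmas on towers $A\supset B\supset C$.

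The gap is the ``if'' direction, which is the actual content of the theorem: you must \emph{produce}, whenever $e$ lies in no $\mathfrak{sl}_2$-triple, a finite-dimensional $G$-module with a nonzero $G$-fixed vector in $\mathrm{im}(e)$, and you only handle the case $e\in\mathfrak{g}^u$ and the single example $\mathrm{SL}_2\rightthreetimes k^2$ before stating that you ``expect this construction to be the main obstacle.'' That expectation is not a proof, and nothing in your outline indicates how the hypothesis would actually be exploited in the mixed case. The paper closes exactly this hole by induction on $\dim G^u$: writing $G=L\rightthreetimes G^u$ and $V=G^u/(G^u,G^u)$, the Basic Lemma gives a dichotomy for $h=l\cdot v$ in $L\rightthreetimes V$ --- either $v\in\mathrm{im}(\rho(l)-\mathrm{Id})$, in which case $h$ is conjugate into $L\rightthreetimes(G^u,G^u)$, a group with strictly smaller unipotent radical to which the inductive hypothesis applies, or $v\notin\mathrm{im}(\rho(l)-\mathrm{Id})$, in which case an $H$-stable hyperplane $V'\supset\mathrm{im}(\rho(l)-\mathrm{Id})$ complementary to $v$ yields a section of $H:G$ (in your language, the functional on $V$ killing $V'$ and equal to $1$ on $v$ is essentially the slice, living in the module $(V\oplus k)^*$ you found in your example). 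Note that the hypothesis ``$H$ is in no reductive subgroup'' enters only through the first branch and the induction; a one-step construction ``from the failure of Jacobson--Morozov'' of the kind you hope for is not known to exist, so without some such descent your programme does not terminate. As it stands, the theorem is not proved.
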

We begin with some auxiliary lemmas.

Let $A\supset B\supset C$ be algebraic groups.
\begin{lemma} Let $A/B$ and $B/C$ be affine varieties. Then $A/C$ is an affine variety. ~\cite[стр 45]{Gr} \end{lemma}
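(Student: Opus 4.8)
The plan is to study the natural $A$-equivariant projection $\pi\colon A/C\to A/B$, $aC\mapsto aB$, and to prove that it is an \emph{affine morphism}. Once this is known the conclusion is immediate: since the base $A/B$ is affine by hypothesis and $A/C=\pi^{-1}(A/B)$, the total space is affine, because the preimage of an affine scheme under an affine morphism is affine. Throughout I use that in characteristic zero all of $A/B$, $B/C$, $A/C$ exist as quasi-projective varieties and that the quotient morphism $q\colon A\to A/B$ is faithfully flat and quasi-compact.

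Geometrically $A/C$ is the bundle over $A/B$ with fibre $B/C$ associated with the principal $B$-bundle $A\to A/B$, so $\pi$ is a fibration with affine fibres; the real point is to upgrade this to the statement that $\pi$ is an affine morphism. The key observation is that pulling $\pi$ back along $q$ trivialises it. Concretely, I would verify that the square
\[
\begin{array}{ccc}
A\times(B/C) & \longrightarrow & A/C\\
\downarrow & & \downarrow\pi\\
A & \stackrel{q}{\longrightarrow} & A/B
\end{array}
\]
is Cartesian, where the top arrow is $(a,bC)\mapsto abC$ and the left arrow is the first projection. The inverse of the induced map to $A/C\times_{A/B}A$ sends $(aC,a')$ to $(a',\,a'^{-1}aC)$, which is well defined precisely because $aB=a'B$ forces $a'^{-1}a\in B$, so that $a'^{-1}aC$ is a genuine point of $B/C$.

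Thus the base change of $\pi$ along the faithfully flat quasi-compact morphism $q$ is the projection $A\times(B/C)\to A$, and this is an affine morphism, being the base change of the affine morphism $B/C\to\operatorname{Spec}k$. Since being an affine morphism is local on the base for the faithfully flat (fppf) topology, faithfully flat descent then yields that $\pi$ itself is affine. Combined with the affineness of $A/B$, this shows that $A/C$ is affine, completing the argument.

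The step I expect to be the crux is exactly the passage from \textbf{$\pi$ becomes affine after the faithfully flat base change $q$} to \textbf{$\pi$ is an affine morphism}: a fibration with affine fibres need not be an affine morphism in general, so one genuinely needs the descent theorem for affineness (equivalently, that affineness of a morphism may be checked after a faithfully flat quasi-compact base change). Everything else — the identification of the Cartesian square and the inheritance of affineness by the total space of an affine morphism over an affine base — is formal.
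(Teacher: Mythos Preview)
Your argument is correct: the Cartesian square trivialises $\pi$ after pullback along the faithfully flat quasi-compact map $q$, and affineness of a morphism descends along fpqc base change, so $\pi$ is affine and hence $A/C$ is affine over the affine base $A/B$. The paper does not give its own proof of this lemma but simply cites Grosshans; the descent argument you wrote out is essentially the standard one underlying that reference.
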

\begin{lemma} Let $A/B$ be an affine variety. Then $A/C$ is affine if and only if $B/C$ is affine. \end{lemma}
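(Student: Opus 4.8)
The plan is to treat the two implications separately, since they rely on genuinely different inputs. The forward implication --- that affineness of $B/C$ forces affineness of $A/C$ --- will follow immediately from the standing hypothesis together with Lemma 1. Indeed, we are given that $A/B$ is affine, and if we also assume $B/C$ is affine, then the chain $A \supset B \supset C$ satisfies exactly the hypotheses of Lemma 1, whence $A/C$ is affine. So this direction requires no new idea.

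The substance lies in the reverse implication: assuming $A/C$ affine, I want to deduce that $B/C$ is affine. The key observation is that the natural projection $\pi \colon A/C \to A/B$, sending $aC$ to $aB$, is a morphism of $G$-varieties whose fiber over the base point $eB$ is precisely the set $\{aC : a \in B\}$, i.e. the image of $B/C$ under the map induced by the inclusion $B \hookrightarrow A$. First I would argue that, because $B$ is a closed subgroup of $A$ containing $C$, this induced map $B/C \to A/C$ is a closed immersion, identifying $B/C$ with the fiber $\pi^{-1}(eB)$. Since $\pi$ is continuous and $\{eB\}$ is a (closed) point of $A/B$, the fiber $\pi^{-1}(eB)$ is a closed subvariety of $A/C$.

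At this point I would invoke the elementary principle that a closed subvariety of an affine variety is itself affine. As $A/C$ is affine by assumption and $B/C \cong \pi^{-1}(eB)$ is a closed subvariety of it, it follows that $B/C$ is affine, which completes the reverse implication and hence the proof.

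I expect the one point demanding care to be the verification that the intrinsic quotient structure on $B/C$ coincides with the structure it inherits as the fiber $\pi^{-1}(eB)$ --- equivalently, that $B/C \to A/C$ is a \emph{closed} immersion and not merely a locally closed one. This rests on $B$ being closed in $A$, which is automatic for algebraic subgroups, but it is exactly the hypothesis that lets the fiber be a genuine closed subvariety, so that the ``closed subvariety of an affine variety is affine'' principle applies cleanly.
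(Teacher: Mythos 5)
Your proof is correct and follows essentially the same route as the paper: the forward implication is an immediate application of Lemma 1, and the reverse implication identifies $B/C$ with a closed subvariety of the affine variety $A/C$. The only cosmetic difference is that you exhibit this subvariety as the fiber of $A/C \to A/B$ over the base point, whereas the paper describes it as the image of the closed $C$-invariant set $B$ under the geometric quotient $A \to A/C$ --- the same subset, with closedness justified slightly differently.
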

\begin{proof} Let $B/C$ be an affine variety. It follows from Lemma 1 that $A/C$ is an affine variety. On the other hand, if $A/C$ is affine, then $B/C$ is the image of the closed $C$-invariant subvariety $B$ by the geometric quotient $~A \rightarrow A/C$.\end{proof}
\begin{definition}\upshape Let $H$ be an algebraic subgroup of an algebraic group $G$. A closed subvariety of G is said to be a {\slshape section} of the action $H:G$ if it intersects all orbits of $H$ in one point and intersects all irreducible components of G by irreducible subvarieties.\end{definition}
{\bfseries Remark.} If there exists a smooth section and a quasiaffine quotient, then the quotient is affine since a bijective morphism of smooth varieties is an isomorphism.
 
{\bfseries Basic Lemma.} Let $G = L \rightthreetimes V$, where L is a reductive group with a representation $\rho:L\rightarrow $GL$(V)$. Let $h$ be a unipotent element of $G$. Let $H$ be a minimal algebraic subgroup that contains it. Then
either there exists an element ${h'} \in L$ conjugated to $h$, or there exists a section of the action~$H:G$.
\begin{proof}Decompose $h$ as $l\cdot v,l\in L, v\in V$. We should examine two cases:

1) If $v\in~$Im$(\rho (l)-{\text Id})$, then there is $f \in V$ such that ${h'}={f^{-1}}{h}{f}\in~L$ 

2) If $v \notin$ Im $(\rho (l)-{\text Id})$ then there exists a hypersurface $V' \subset V$ containing Im$(\rho (l)-{\text Id})$ and such that $V=\langle v\rangle  \bigoplus V'$.
Let $K=LV^{'}$ be the preimage of $V'$ under the morphism $G\rightarrow L \backslash G$. Then $K$ is a section of the action $H:G$. Since $\rho (l)$ preserves Im($\rho (l)-{\text Id}$) and its action on the quotient $V/$Im($\rho(l)-{\text Id}$) is trivial, we have the action $H:V/$Im($\rho(l)-{\text Id}$), where $H$ acts by parallel translations. 
It is easy to see that for this action there is a section of $G$ that is a hypersurface; its preimage we denote by $V'$. Since the action $H$: $V/V'$ is free and transitive, the preimage of zero under the morphism $G \rightarrow L\backslash G \rightarrow (L\backslash G)/V'$ is a section. \end{proof}

\begin{proof}[{\bfseries Proof of Theorem 1.}]	If a reductive subgroup $S$ of $G$ contains $H$, then $S/H$ is not affine (Matsushima's criterion). Hence $G/H$ is not affine (Lemma 2). Now we prove that if such subgroup does not exist then the quotient is not affine.

	In our proof we use induction on dimension of the unipotent radical $G^u$ of $G$. If dim$~G^u$$=0$, then it is holds true. Assume that dim$~G^u > 0$.
Let $L$ be a maximal reductive subgroup of $G$ and $h$ be a nontrivial element of $H$. It has a presentation $h = l \cdot r$, where $l\in L$ and $r\in G^u$. We will see that $L$ conjugates the vector space $V=G^u/(G^{u},G^{u})$ 
and by the Basic lemma either $h$ is
conjugated to some element $h'$, where $h'$ is contained in $L \rightthreetimes (G^{u},G^{u})$, or there is a section of the action $H$ on $L \rightthreetimes (G^u/(G^{u},G^{u})) $ and consequently on $G$. Let $H'$ be the minimal algebraic subgroup that contains element $h'$. Then $G/H \cong G/ H'$
but $(L \rightthreetimes (G^u /(G^{u},G^{u})))/L$ is affine and $(L\rightthreetimes (G^u,G^u))/H'$ is affine by the inductive assumption. Using Lemma 2 we get that $G/H$ is an affine variety. \end{proof}
The main result of this work shows that the quotient $G/H$, where $H$ is a one-dimensional subgroup, is affine if and only if any group conjugated to $H$ does not intersect a maximal reductive subgroup of $G$. This fact arises a question: "Is this condition sufficient for  $G/H$ to be an affine variety?".
This question was solved negatively in ~\cite{Wink}.

The author thanks I.V. Arzhantsev and E.B.Vinberg for their effective questions and remarks. Without their help the work would not be published. 

\end{document}